\newtheorem{theorem}{Theorem}
\theoremstyle{plain}
\newtheorem{corollary}{Corollary}
\newtheorem{definition}{Definition}
\numberwithin{equation}{section}
\begin{document}
\title[Multipliers of Grand and Small Lebesgue Spaces]{Multipliers of Grand and Small Lebesgue Spaces}
\author{A.Turan G\"{u}rkanl\i }
\address{Istanbul Arel University Faculty of Science and Letters Department
of Mathematics and Computer Sciences}
\email{turangurkanli@arel.edu.tr}
\date{}
\subjclass[2000]{Primary 46E30; Secondary 46E35; 46B70.}
\keywords{Grand Lebesgue space, generalized grand Lebesgue space, grand
Wiener Amalgam space}

\begin{abstract}
Let $G$ a locally compact abelian group with Haar measure $\mu$ and let $1<p<\infty. $ In the present paper we determine necessary and sufficient conditions on $G$ for the grand Lebesgue space $ L^{p),\theta}(G)$ to be a Banach algebra under convolution.Later we characterize the multipliers of the grand Lebesgues, $L^{p,)\theta}(G)$ and the small Lebesgue spaces $L^{(P'\theta}$, where $\frac{1}{p}+\frac{1}{p'}=1$
\end{abstract}

\maketitle

\section{Notations}

Let $\Omega $ be locally compact hausdorff space and let $\left( \Omega ,\mathfrak{M},\mu \right) $ be finite measure space. 
 The grand Lebesgue space $L^{p)}\left( \Omega \right) $ was introduced by Iwaniec-Sbordone in $\left[ 15\right]. $ These authors, in their studies related with the integrability  properties of the Jacobian in a bounded open set  $\Omega $, defined the grand lebesgue space. This Banach space is defined by the norm
\begin{align*}
\left\Vert {f}\right\Vert _{p)}=\sup_{0<\varepsilon \leq p-1}\left(
\varepsilon \int_{\Omega }\left\vert f\right\vert ^{p-\varepsilon }d\mu
\right) ^{\frac{1}{p-\varepsilon }}\tag{1}
\end{align*}
where $1<p<\infty$ .  For  $0<\varepsilon \leq p-1,$

$$L^{p}\left( \Omega \right) \subset L^{p)}\left( \Omega \right) \subset \L^{p-\varepsilon }\left( \Omega \right)$$

 hold. For some properties and applications of $L^{p)}\left( \Omega \right) $ spaces we refer to papers $\left[
1\right] ,\left[ 3\right] ,\left[ 5\right] ,\left[ 6\right] ,$ $\left[ 13
\right] $and $\left[ 15\right] .$ 

 A generalization of the grand Lebesgue
spaces are the spaces $L^{p),\theta }\left( \Omega \right) ,$ $\theta \geq 0,$
defined by the norm
\begin{equation*}
\left\Vert f\right\Vert _{p),\theta ,\Omega }=\left\Vert f\right\Vert
_{p),\theta }=\sup_{0<\varepsilon \leq p-1}\varepsilon ^{\frac{\theta }{
p-\varepsilon }}\left( \int_{\Omega }\left\vert f\right\vert ^{p-\varepsilon
}d\mu \right) ^{\frac{1}{p-\varepsilon }}=\sup_{0<\varepsilon \leq
p-1}\varepsilon ^{\frac{\theta }{p-\varepsilon }}\left\Vert {f}\right\Vert
_{p-\varepsilon };\tag{2}
\end{equation*}%
when $\theta =0$ the space $L^{p),0}\left( \Omega \right) $ reduces to Lebesgue
space $L^{p}\left( \Omega \right) $ and when $\theta =1$ the space $%
L^{p),1}\left( \Omega \right) $ reduces to grand Lebesgue space $L^{p)}\left(
\Omega \right)$,  $\left( \text{see }\left[ 1\right] ,\left[ 12\right]
\right) $ . We have  for  $0<\varepsilon \leq p-1,$

$$L^{p}\left( \Omega \right) \subset L^{p),\theta }\left( \Omega \right) \subset
L^{p-\varepsilon }\left( \Omega \right) .$$ It is known that the subspace ${C_{c}^{\infty }(\Omega)}$ is not dense in $L^{p)}\left( \Omega \right). $  Its closure  $[L^p]_{p),\theta}$ consists of functions $f \in L^{p)}\left(\Omega \right)$  such that
\begin{equation}
\lim_{\varepsilon \rightarrow 0}\varepsilon ^{\frac{\theta }{p-\varepsilon }
}\left\Vert f\right\Vert _{p-\varepsilon}=0, \left[12\right]. \tag{3}
\end{equation}
 It is also known that the generalized grand Lebesgue space  $L^{p),\theta }\left( \Omega \right) ,$ is not reflexive.
Different properties and applications of these spaces were discussed in $[ 1] ,[10],[11],[12] $ and $[ 13] .$

Let $p^{^{\prime }}=\frac{p}{p-1},$ $1<p<\infty .$ First consider an
auxiliary space namely $L^{(p^{\prime },\theta }( \Omega)
,\theta \ge 0,$ defined by 
\begin{equation*}
\left\Vert g\right\Vert _{(p^{^{\prime }},\theta
}=\inf_{g=\sum\limits_{k=1}^{\infty }g_{k}}\left\{
\sum\limits_{k=1}^{\infty }\inf_{0<\varepsilon \leq p-1}\varepsilon ^{-%
\frac{\theta }{p-\varepsilon }}\left( \int_{\Omega }\left\vert
g_{k}\right\vert ^{\left( p-\varepsilon \right) ^{^{\prime }}}dx\right) ^{%
\frac{1}{\left( p-\varepsilon \right) ^{^{\prime }}}}\right\}\tag{4}
\end{equation*}%
where the functions $g_{k},k\in \mathbb{N},$ being in $\mathcal{M}_{0},$ the set of all real valued measurable
functions, finite a.e. in $\Omega .$ After this definition we define the space $L^{p)^{^{\prime }},\theta }\left( \Omega \right)$ by the norm
\begin{equation*}
L^{p)^{^{\prime }},\theta }\left( \Omega \right) =\left\{ g\in \mathcal{M}%
_{0}:\left\Vert g\right\Vert _{p)^{^{\prime }},\theta }<+\infty \right\} ,
\end{equation*}%
where%
\begin{equation*}
\left\Vert g\right\Vert _{p)^{^{\prime },\theta }}=\sup_{\substack{ 0\leq
\psi \leq \left\vert g\right\vert  \\ \psi \in L^{(p^{\prime },\theta }}}%
\left\Vert \psi \right\Vert _{(p^{^{\prime }},\theta }.\tag{5}
\end{equation*}%
For $\theta =0$ it is $\left\Vert f\right\Vert _{(p^{^{\prime
}},0}=\left\Vert f\right\Vert _{p)^{^{\prime }},\theta },\left[ 4\right] ,[12] .$ It is known by Theorem 3.3. in [4] that 

$$\left\Vert g\right\Vert _{p)^{^{\prime },\theta }}\equiv\left\Vert g\right\Vert _{(p^{^{\prime }},\theta}$$ and
 \begin{equation*}
\left\Vert g\right\Vert _{p)^{^{\prime },\theta }}=\left\Vert g\right\Vert _{(p^{^{\prime }},\theta
}=\inf_{g=\sum\limits_{k=1}^{\infty }g_{k}}\left\{
\sum\limits_{k=1}^{\infty }\inf_{0<\varepsilon \leq p-1}\varepsilon ^{-%
\frac{\theta }{p-\varepsilon }}\left( \int_{\Omega }\left\vert
g_{k}\right\vert ^{\left( p-\varepsilon \right) ^{^{\prime }}}dx\right) ^{
\frac{1}{\left( p-\varepsilon \right) ^{^{\prime }}}}\right\}.\tag{6}
\end{equation*}

Let $(B,\|.\|_{B})$  be a Banach space and let $(A,\|.\|_{A})$ be a Banach algebra . If $B$ is an algeraic $A$- module, and $\|ab\|_{B} \leq\|a\|_A\|b\|_A$ for all $a\in A, b\in B$, then B is called a Banach A-module.  If  $B_1,B_2$ are Banach $A$-modules, then the a multiplier  (or module homomorphism ) from $B_1$ to $B_2$ is a bounded linear operator $T$ from  $B_1$ to $B_2$ which commutes with the module multiplication, i.e. $T(ab)=aT(b)$   for all $a\in A, b\in B_1$, The space of multipliers from $B_1$ to $B_2$ is denoted by $M(B_1 ,B_2)$ (or $ Hom(B_1 ,B_2)$), [8],[14],[17] [18] and [19].

\section{Main Results} 
\begin{theorem}
Let $G$ be a locally compact Abelian group, and $\mu$ its Haar measure. Then the generalized grand Lebesgue space $L^{p),\theta }$, $1<p<\infty $ is a Banach algebra under convolution if and only if $G$ is compact.
\end{theorem}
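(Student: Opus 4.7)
The plan is to prove the two implications separately. For sufficiency, assume $G$ is compact so that the Haar measure $\mu$ is finite; the key observation is that from the definition (2), restricting the defining supremum to $\varepsilon=p-1$ yields $(p-1)^{\theta}\|f\|_{1}\leq\|f\|_{p),\theta}$, giving a continuous embedding $L^{p),\theta}(G)\hookrightarrow L^{1}(G)$. Combining this with Young's convolution inequality $\|f*g\|_{p-\varepsilon}\leq\|f\|_{1}\|g\|_{p-\varepsilon}$ (valid on any LCA group), multiplying by $\varepsilon^{\theta/(p-\varepsilon)}$, and taking the supremum over $\varepsilon\in(0,p-1]$, I obtain
\[
\|f*g\|_{p),\theta}\leq \|f\|_{1}\|g\|_{p),\theta}\leq (p-1)^{-\theta}\|f\|_{p),\theta}\|g\|_{p),\theta},
\]
which is the submultiplicative Banach algebra estimate up to a constant that can be absorbed into an equivalent norm.

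For necessity I would proceed by contraposition: assume $G$ is non-compact and produce elements of $L^{p),\theta}(G)$ violating the algebra inequality. Using non-compactness, fix a compact symmetric neighborhood $V$ of the identity with $\mu(V)=1$ and select a Sidon-type set $\{x_{1},\ldots,x_{n}\}\subset G$ such that the translates $x_{i}+V$ and the sumsets $x_{i}+x_{j}+2V$ are pairwise disjoint. Setting $f_{n}=\sum_{i=1}^{n}\chi_{x_{i}+V}$ gives $\|f_{n}\|_{p-\varepsilon}=n^{1/(p-\varepsilon)}$ and a decomposition of $f_{n}*f_{n}$ into translates of $\chi_{V}*\chi_{V}$ at distinct locations. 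The plan is then to locate the $\varepsilon$ realizing each of $\|f_{n}\|_{p),\theta}$ and $\|f_{n}*f_{n}\|_{p),\theta}$ and to show that the ratio $\|f_{n}*f_{n}\|_{p),\theta}/\|f_{n}\|_{p),\theta}^{2}$ grows unboundedly in $n$.

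The main obstacle is precisely the necessity direction. The sufficiency argument above nowhere explicitly uses compactness of $G$; it relies only on the embedding $L^{p),\theta}\subset L^{1}$ extracted from the $\varepsilon=p-1$ term and on Young's inequality. Extracting the compactness hypothesis therefore requires either sharper test functions than the Sidon-type sum proposed above, or a more refined extremal analysis of the supremum defining $\|\cdot\|_{p),\theta}$, showing that on a non-compact $G$ the extremizing $\varepsilon$ for $\|f_{n}*f_{n}\|_{p),\theta}$ migrates away from the boundary $\varepsilon=p-1$ in a manner that destroys the submultiplicative estimate. Identifying the correct family of test functions and verifying the failure of submultiplicativity is where the real work will lie.
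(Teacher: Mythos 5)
Your sufficiency half is correct and is, in substance, the paper's argument: the paper invokes Zelazko's theorem to assert that each layer $L^{p-\varepsilon}(G)$, $0<\varepsilon\leq p-1$, is a convolution algebra on a compact group and then takes the supremum of $\varepsilon^{\theta/(p-\varepsilon)}\left\Vert f\ast g\right\Vert_{p-\varepsilon}$, whereas you derive the same layerwise bound from the embedding $L^{p),\theta}(G)\hookrightarrow L^{1}(G)$ and Young's inequality $\left\Vert f\ast g\right\Vert_{p-\varepsilon}\leq\left\Vert f\right\Vert_{1}\left\Vert g\right\Vert_{p-\varepsilon}$; the constant $(p-1)^{-\theta}$ is harmless after renorming. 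The problem is entirely in the necessity half, which you have not closed: you explicitly defer ``the real work,'' so as it stands the proposal proves only one implication.

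The gap is concrete, and the specific plan you sketch would fail. If the translates $x_{i}+V$ and the sumsets $x_{i}+x_{j}+2V$ are pairwise disjoint, then $f_{n}\ast f_{n}$ decomposes into roughly $n^{2}$ disjointly supported copies of $\chi_{V}\ast\chi_{V}$ (the off-diagonal ones carrying multiplicity $2$), so $\left\Vert f_{n}\ast f_{n}\right\Vert_{p-\varepsilon}\asymp n^{2/(p-\varepsilon)}\asymp\left\Vert f_{n}\right\Vert_{p-\varepsilon}^{2}$ uniformly in $\varepsilon$; consequently the ratio $\left\Vert f_{n}\ast f_{n}\right\Vert_{p),\theta}/\left\Vert f_{n}\right\Vert_{p),\theta}^{2}$ stays \emph{bounded} in $n$ and no violation of submultiplicativity is produced. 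The paper avoids any such construction: it specializes to $\theta=0$, where $L^{p),0}(G)=L^{p}(G)$, and quotes Zelazko's theorem [21] that for $1<p<\infty$ the space $L^{p}(G)$ is a convolution algebra only if $G$ is compact (Zelazko's counterexamples are single slowly decaying functions, of the type $|x|^{-1/p}(\log|x|)^{-2/p}$ on $\mathbb{R}$, not sums of disjoint bumps). If you insist on a direct argument for a fixed $\theta>0$ rather than this reduction, you must in addition explain what $L^{p),\theta}(G)$ means when $\mu(G)=\infty$, since definition (2) rests on the finite-measure inclusions $L^{p}\subset L^{p-\varepsilon}$; without that, your contrapositive has no well-defined space to work in. Either quote Zelazko's theorem, as the paper does, or import his test functions and then carry out the extremal analysis in $\varepsilon$ that you only allude to.
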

\begin{proof}
Let $G$ be a compact Abelian group. For the proof of  the generalized grand Lebesgue space $L^{p),\theta }$, $1<p<\infty $ is a Banach algebra under convolution, it is enough to show that 
\begin{equation*}
\left\Vert f\ast g\right\Vert _{p),\theta }\leq \left\Vert f\right\Vert
_{p),\theta }\left\Vert g\right\Vert _{p),\theta }
\end{equation*}%
for all $f,g\in L^{p),\theta }\left( G\right) .$ Since $G$ is compact, then $%
L^{p}\left( G\right) $ is a Banach convolution algebra for all $p,$ $1\leq p\leq\infty,$ by Zelasko $\left[ 21
\right] .$Thus $\left( L^{p-\varepsilon }\left( G\right) ,\left\Vert
.\right\Vert _{p-\varepsilon }\right) $ is a Banach algebra under
convolution for all $0<\varepsilon \leq p-1.$ Then for all $f,g\in
L^{p-\varepsilon }\left( G\right) ,$ we have 
\begin{equation*}
\left\Vert f\ast g\right\Vert _{p-\varepsilon }\leq \left\Vert f\right\Vert
_{p-\varepsilon }\left\Vert g\right\Vert _{p-\varepsilon }.
\end{equation*}%
Thus 
\begin{eqnarray*}
\left\Vert f\ast g\right\Vert _{p),\theta } &=&\sup_{0<\varepsilon \leq
p-1}\varepsilon ^{\frac{\theta }{p-\varepsilon }}\left\Vert f\ast
g\right\Vert _{p-\varepsilon }\leq \sup_{0<\varepsilon \leq p-1}\varepsilon
^{\frac{\theta }{p-\varepsilon }}\left\Vert f\right\Vert _{p-\varepsilon
}.\sup_{0<\varepsilon \leq p-1}\varepsilon ^{\frac{\theta }{p-\varepsilon }
}\left\Vert g\right\Vert _{p-\varepsilon } \\
&\leq &\left\Vert f\right\Vert _{p),\theta }\left\Vert g\right\Vert
_{p),\theta }.
\end{eqnarray*}

Conversely Assume that $L^{p),\theta }( G) $ is a Banach algebra under convolution
for all $\theta \geq 0$ and, $1<p<\infty$. Since $L^{p),\theta }( G) $ reduces to $
L^{p}( G) $ when $\theta =0$, and $L^p(G)$ is a
Banach algebra under convolution, then $G$ is compact again from Zelasko 
$[ 21].$ 
\end{proof}
\begin{theorem}
Let $G$ be a compact Abelian group and $\mu$  its Haar measure. Then   $[L^p]_{p),\theta}$ is a Banach convolution module over itself.
\begin{proof}
Let  $f,g\in [L^p]_{p),\theta}.$ Then
\begin{equation*}
 \lim_{\varepsilon \rightarrow 0}\varepsilon ^{\frac{\theta }{p-\varepsilon }
}\| f\| _{p-\varepsilon}=0,\tag{8}
\end{equation*}
 and $$\lim_{\varepsilon \rightarrow 0}\varepsilon ^{\frac{\theta }{p-\varepsilon } 
}\| g\| _{p-\varepsilon}=0.$$ Thus $$\lim_{\varepsilon \rightarrow 0}\varepsilon ^{\frac{\theta }{p-\varepsilon } 
}\| f+g\| _{p-\varepsilon}\le\lim_{\varepsilon \rightarrow 0}\varepsilon ^{\frac{\theta }{p-\varepsilon } 
}\| f\| _{p-\varepsilon}+\lim_{\varepsilon \rightarrow 0}\varepsilon ^{\frac{\theta }{p-\varepsilon } 
}\| g\| _{p-\varepsilon}$$ and  so $f+g\in [L^p]_{p),\theta}. $ Since $G$ is compact, by Theorem 1, $f*g\in L^{p),\theta}, $ and $$\|f*g\|_{p-\varepsilon}\le\|f\|_{p-\varepsilon}\|g\|_{p-\varepsilon}.$$
 According to this inequality $$\lim_{\varepsilon \rightarrow 0}\varepsilon ^{\frac{\theta }{p-\varepsilon } 
}\| f*g\| _{p-\varepsilon}\le\lim_{\varepsilon \rightarrow 0}\varepsilon ^{\frac{\theta }{p-\varepsilon } 
}\| f\| _{p-\varepsilon}\| g\| _{p-\varepsilon}=0$$ for all $0<\varepsilon\le p-1.$ Thus we have $f*g\in[L^p]_{p),\theta}$.
We know that $[L^p]_{p),\theta}$ is closed in $L^{p),\theta}$. Then it is a Banach space. Finally  $[L^p]_{p),\theta}$ is a Banach module over itself
\end{proof} 
\end{theorem}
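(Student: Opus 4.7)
The plan is to verify three things in turn: that $[L^p]_{p),\theta}$ is a Banach space, that it is stable under convolution, and that the submultiplicative norm estimate $\|f\ast g\|_{p),\theta}\le\|f\|_{p),\theta}\|g\|_{p),\theta}$ holds on it. The first property is essentially immediate, since by construction $[L^p]_{p),\theta}$ is the closure of $L^{p}(G)$ inside the Banach space $(L^{p),\theta}(G),\|\cdot\|_{p),\theta})$ and is therefore itself a Banach space. The third property is equally quick once Theorem~1 is invoked: the submultiplicative estimate already holds on the ambient algebra $L^{p),\theta}(G)$ for compact $G$ and is inherited by any subset, in particular by $[L^p]_{p),\theta}$.

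The only nontrivial step is closure under convolution, and my intention is to avoid bounding $\lim_{\varepsilon\to 0}\varepsilon^{\theta/(p-\varepsilon)}\|f\ast g\|_{p-\varepsilon}$ pointwise in $\varepsilon$ through $\|f\ast g\|_{p-\varepsilon}\le\|f\|_{p-\varepsilon}\|g\|_{p-\varepsilon}$: for a generic $g\in[L^p]_{p),\theta}$ the factor $\|g\|_{p-\varepsilon}$ may a priori grow like $\varepsilon^{-\theta/p}$, and there is no obvious way to absorb this growth into the decay of $\varepsilon^{\theta/(p-\varepsilon)}\|f\|_{p-\varepsilon}$. I would instead run an approximation argument directly in the $\|\cdot\|_{p),\theta}$ norm. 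Given $f,g\in[L^p]_{p),\theta}$, pick sequences $f_{n},g_{n}\in L^{p}(G)$ with $\|f-f_{n}\|_{p),\theta}\to 0$ and $\|g-g_{n}\|_{p),\theta}\to 0$; these exist since by definition $[L^p]_{p),\theta}$ is the $L^{p),\theta}$-closure of $L^{p}(G)$. Compactness of $G$ together with Zelazko's theorem gives $f_{n}\ast g_{n}\in L^{p}(G)\subset[L^p]_{p),\theta}$, and a standard bilinear splitting combined with Theorem~1 yields
\begin{equation*}
\|f\ast g-f_{n}\ast g_{n}\|_{p),\theta}\le\|f\|_{p),\theta}\,\|g-g_{n}\|_{p),\theta}+\|f-f_{n}\|_{p),\theta}\,\|g_{n}\|_{p),\theta}.
\end{equation*}
Since $\|g_{n}\|_{p),\theta}\to\|g\|_{p),\theta}$ is in particular bounded, the right side tends to $0$, and the closedness of $[L^p]_{p),\theta}$ in $L^{p),\theta}(G)$ then places $f\ast g$ inside $[L^p]_{p),\theta}$.

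The hard part is precisely this closure step: the pointwise-in-$\varepsilon$ bound on $\|f\ast g\|_{p-\varepsilon}$ is too crude to deliver the defining limit $\lim_{\varepsilon\to 0}\varepsilon^{\theta/(p-\varepsilon)}\|f\ast g\|_{p-\varepsilon}=0$ by itself, which forces one to reroute through an $L^{p),\theta}$-approximation by $L^{p}$ functions. Once that is done, the module identity $f\ast(g+h)=f\ast g+f\ast h$, bilinearity, and the estimate from Theorem~1 immediately upgrade $[L^p]_{p),\theta}$ to a Banach convolution module over itself.
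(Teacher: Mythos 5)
Your proof is correct, but it diverges from the paper's at exactly the one step that matters. The paper argues pointwise in $\varepsilon$: from $\|f\ast g\|_{p-\varepsilon}\le\|f\|_{p-\varepsilon}\|g\|_{p-\varepsilon}$ it passes directly to $\lim_{\varepsilon\to 0}\varepsilon^{\theta/(p-\varepsilon)}\|f\ast g\|_{p-\varepsilon}\le\lim_{\varepsilon\to 0}\varepsilon^{\theta/(p-\varepsilon)}\|f\|_{p-\varepsilon}\|g\|_{p-\varepsilon}=0$, which is precisely the argument you chose to avoid. Your worry is well founded: membership in $[L^p]_{p),\theta}$ only gives $\varepsilon^{\theta/(p-\varepsilon)}\|g\|_{p-\varepsilon}\to 0$, which is compatible with $\|g\|_{p-\varepsilon}$ growing like $\varepsilon^{-\theta/(2(p-\varepsilon))}$; the product $\varepsilon^{\theta/(p-\varepsilon)}\|f\|_{p-\varepsilon}\|g\|_{p-\varepsilon}$ then carries a single factor of $\varepsilon^{\theta/(p-\varepsilon)}$ against two potentially unbounded norms and need not tend to zero, so the paper's limit step is not justified as written. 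Your replacement --- approximate $f,g$ in $\|\cdot\|_{p),\theta}$ by $f_n,g_n\in L^p(G)$, note $f_n\ast g_n\in L^p(G)\subset[L^p]_{p),\theta}$ by Zelazko's theorem for compact $G$, and combine the bilinear splitting $f\ast g-f_n\ast g_n=f\ast(g-g_n)+(f-f_n)\ast g_n$ with the submultiplicativity of Theorem 1, the boundedness of $\|g_n\|_{p),\theta}$, and the closedness of $[L^p]_{p),\theta}$ --- is airtight and costs only a few extra lines; it is the standard way to show a closed subalgebra generated by a dense convolution-stable subspace is itself convolution-stable. The remaining points (completeness from closedness, the norm estimate inherited from the ambient algebra $L^{p),\theta}(G)$) agree with the paper. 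So: correct, and in fact a repair of the paper's own argument rather than a mere variant of it.
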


\begin{theorem} Let $G$ be a compact Abelian group and $\mu$ its Haar measure. Then
 
a.  The space $[L^p]_{p),\theta} $ admits the module factorization property. 

b.  We have $$M([L^p]_{p),\theta},L^{p)',\theta})=L^{p)',\theta},$$ where $M([L^p]_{p),\theta},L^{p)',\theta})$ denotes the space of multipliers from $[L^p]_{p),\theta}$ to $L^{p)',\theta}.$
\end{theorem}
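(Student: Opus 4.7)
My plan is to derive both parts from a Cohen--Hewitt factorization argument, exploiting the compactness of $G$. For (a), the key step is to exhibit a bounded approximate identity for $[L^p]_{p),\theta}$ (acting on itself by convolution) in the $L^{p),\theta}$-norm, so that Cohen--Hewitt produces for each $f\in[L^p]_{p),\theta}$ a factorization $f=g\ast h$ with $g,h\in[L^p]_{p),\theta}$. A natural candidate is a net $(e_V)_V\subset C(G)$ with $e_V\ge 0$, $\int_G e_V\,d\mu=1$, and $\operatorname{supp}(e_V)\subset V$, where $V$ runs over a neighborhood basis of the identity of $G$. Young's inequality gives $\|e_V\ast f-f\|_{p-\varepsilon}\le 2\|f\|_{p-\varepsilon}$ uniformly in $V$, and the defining limit condition~(3) for $f\in[L^p]_{p),\theta}$ then forces $\varepsilon^{\theta/(p-\varepsilon)}\|e_V\ast f-f\|_{p-\varepsilon}\to 0$ uniformly in $V$ as $\varepsilon\to 0$. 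For $\varepsilon$ bounded away from $0$, the monotonicity of $L^q$-norms on a normalized compact group together with the standard $L^{q}$-approximate-identity property delivers uniform smallness. Combining the two regimes gives $\|e_V\ast f-f\|_{p),\theta}\to 0$, and Cohen--Hewitt then produces the desired factorization.

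For (b), the easy inclusion $L^{p)',\theta}\subseteq M([L^p]_{p),\theta},L^{p)',\theta})$ follows from a Young-type estimate in the small-Lebesgue setting based on~(5)--(6): convolution by a fixed $\mu\in L^{p)',\theta}$ defines a bounded module homomorphism $T_\mu(f)=\mu\ast f$. For the reverse, take $T\in M([L^p]_{p),\theta},L^{p)',\theta})$ and use (a) together with the approximate identity $(e_V)$. The module identity $T(e_V\ast f)=T(e_V)\ast f$, combined with $T(e_V\ast f)\to T(f)$ in $L^{p)',\theta}$, shows $T(e_V)\ast f\to T(f)$ for every $f\in[L^p]_{p),\theta}$. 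Since $\{T(e_V)\}$ is bounded in $L^{p)',\theta}$, realizing $L^{p)',\theta}$ as a dual space via the K\"othe-dual formulas~(4)--(6) allows extraction of a weak-$\ast$ cluster point $\mu\in L^{p)',\theta}$; passing to the limit yields $T(f)=\mu\ast f$, identifying $T$ with $T_\mu$ and establishing the equality.

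The principal technical obstacle lies in~(a): the obvious candidate $e_V=\chi_V/\mu(V)$ has $L^{p),\theta}$-norm that blows up as $\mu(V)\to 0$, so there is no naive bounded approximate identity in $[L^p]_{p),\theta}$. The workaround is either to choose a smoother BAI with controlled $L^{p),\theta}$-norm (exploiting the damping weight $\varepsilon^{\theta/(p-\varepsilon)}$ against the support size) or, alternatively, to apply Cohen--Hewitt with the $L^{1}(G)$-module structure to get $L^{1}(G)\ast[L^p]_{p),\theta}=[L^p]_{p),\theta}$ and then to transfer the $L^{1}$-factor into $[L^p]_{p),\theta}$ by density (since $C_{c}^{\infty}(G)\subset[L^p]_{p),\theta}\subset L^{1}(G)$ on a compact group). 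A parallel subtlety in~(b) is justifying the weak-$\ast$ extraction, which rests on precise identification of the predual of $L^{p)',\theta}$ via the small-Lebesgue structure laid out in Section~1.
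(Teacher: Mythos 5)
There is a genuine gap, and it sits exactly where you flagged the ``principal technical obstacle'' in part (a): neither of your proposed workarounds actually closes it. A bounded approximate identity for $[L^p]_{p),\theta}$ in the $\Vert\cdot\Vert_{p),\theta}$-norm does not exist (on an infinite compact group no $L^q$ with $q>1$ admits one --- any net with $\int e_V\,d\mu=1$ and shrinking support has $\Vert e_V\Vert_{p-\varepsilon}\ge\mu(V)^{-1+1/(p-\varepsilon)}\to\infty$, and the damping factor $\varepsilon^{\theta/(p-\varepsilon)}$ is taken as a supremum over $\varepsilon$, so it cannot rescue this), hence Cohen--Hewitt cannot be applied with $[L^p]_{p),\theta}$ as the acting algebra in its own norm. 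Your fallback --- factor over $L^1$ and then ``transfer the $L^1$-factor into $[L^p]_{p),\theta}$ by density'' --- does not work either: replacing the $L^1$-factor $g$ in $f=g\ast h$ by approximants $g_n\in[L^p]_{p),\theta}$ yields only that $[L^p]_{p),\theta}\ast[L^p]_{p),\theta}$ is \emph{dense} in $[L^p]_{p),\theta}$, not the exact factorization claimed in (a). The missing ingredient, which is what the paper uses, is that $[L^p]_{p),\theta}$ contains an approximate identity whose elements lie in $[L^p]_{p),\theta}$ but which is \emph{bounded in $L^1(G)$} (Theorem 6 of [13]); density of the products plus the module factorization theorem for such algebras (Theorem 8.5 in [20]) then upgrades density to equality.

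The same unboundedness defeats your argument for (b): the weak-$\ast$ extraction requires $\{T(e_V)\}$ to be bounded in $L^{p)',\theta}$, but the only available estimate is $\Vert T(e_V)\Vert_{p)',\theta}\le\Vert T\Vert\,\Vert e_V\Vert_{p),\theta}$, and $\Vert e_V\Vert_{p),\theta}$ is unbounded for the reason above (boundedness in $L^1$ does not help, since $T$ is only continuous for the $\Vert\cdot\Vert_{p),\theta}$-norm). The paper takes an entirely different, softer route for (b): it invokes the duality $([L^p]_{p),\theta})^{\ast}=L^{p)',\theta}$ (Corollary 3.8 in [11]) and Rieffel's identification of module homomorphisms into a dual module with the dual of a module tensor product (Corollary 2.13 in [19]), which together with the factorization from (a) give $M([L^p]_{p),\theta},L^{p)',\theta})=([L^p]_{p),\theta}\ast[L^p]_{p),\theta})^{\ast}=([L^p]_{p),\theta})^{\ast}=L^{p)',\theta}$ with no compactness or cluster-point argument at all. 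If you want to keep a direct representation-by-convolution proof of (b), you would need to first establish boundedness of $T(e_V)$ by some other means, which your sketch does not supply.
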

\begin{proof}
a. It is known by Theorem 6 in [13] that The space $[L^p]_{p),\theta} $ admits an approximate identity bounded in
 $L^1 (G).$ We have proved in Theorem 1, that $ [L^p]_{p),\theta}$ is a Banach convolution algebra. This implies 
$[L^p]_{p),\theta}$ is a Banach convolution module over itself. Thus $$[L^p]_{p),\theta} *[L^p]_{p),\theta}\subset\ [L^p]_{p),\theta}.$$ We are going to show that $[L^p]_{p),\theta} *[L^p]_{p),\theta}$ is dense in$[L^p]_{p),\theta}$. Let $(e_\alpha)_{\alpha \in I}$ be an approximate identity in $[L^p]_{p),\theta}$ bounded in $L^1(G)$. Take any function $f\in [L^p]_{p),\theta}.$ Since $f*e_\alpha\in[L^p]_{p),\theta} *[L^p]_{p),\theta}$, for all $\alpha\in I,$ we have $(f*e_{\alpha})_{\alpha\in I}\subset[L^p]_{p),\theta} *[L^p]_{p),\theta}.$ Since $e_{\alpha})_{\alpha\in I}$ is an approximate identity, then  the sequence $(f*e_{\alpha})_{\alpha\in I}$ converges to $f.$ Hence $[L^p]_{p),\theta} *[L^p]_{p),\theta}$ is dense in $[L^p]_{p),\theta}$. So, by the module factorization theorem (see 8.5 Theorem in [20]), we write  $$[L^p]_{p),\theta} *[L^p]_{p),\theta}= [L^p]_{p),\theta}.$$ This completes the proof of first part.

b. It is known by Corollary 3.8, in [11] that the dual of $[L^p]_{p),\theta} $ is isometrically isomorphic to $L^{p)',\theta}$, i.e $$([L^p]_{p),\theta})^*=L^{p)'\theta}$$. Then by (a) and  by Corollary 2.13 in [19] we write $$Hom_{L^1}([L^p]_{p),\theta},L^{p)',\theta})=([L^p]_{p),\theta}*[L^p]_{p),\theta})^*=([L^p]_{p),\theta})^*=L^{p)'\theta}.$$ Theorem 3 is therefore proved.
\end{proof}
\begin{corollary}
Let $G$ be compact Abelian group.For every $T\in M(L^{p),\theta},L^{(p',\theta}),$ there exists a function $f\in L^{(p',\theta}(G) $ such that $f$ is the restriction of $T$, to $[L^p]_{p)\theta}$, i.e. $T/[L^p]_{p)\theta}=f$.
\end{corollary}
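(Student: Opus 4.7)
My plan is to reduce the corollary to Theorem 3(b) via restriction to the closed subspace $[L^p]_{p),\theta}\subset L^{p),\theta}$. First I would note that, by the very definition of $[L^p]_{p),\theta}$ as the closure of $L^{p}(G)$ inside $L^{p),\theta}$, the inclusion $[L^p]_{p),\theta}\hookrightarrow L^{p),\theta}$ is a continuous embedding. Consequently, for any $T\in M(L^{p),\theta},L^{(p',\theta})$, the restriction $T_{0}:=T|_{[L^p]_{p),\theta}}$ is automatically a bounded linear map from $[L^p]_{p),\theta}$ into $L^{(p',\theta}=L^{p)',\theta}$.

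Next I would verify that $T_{0}$ is itself a multiplier in the sense of Theorem 3(b), i.e.\ that it commutes with the module action. Since the convolution used to give $L^{p),\theta}$ its module structure (coming from Theorem 1, with $G$ compact) restricts exactly to the convolution appearing in Theorem 2 on $[L^p]_{p),\theta}$, the identity $T(h*g)=h*T(g)$ passes to $g\in[L^p]_{p),\theta}$ without change. Hence $T_{0}\in M([L^p]_{p),\theta},L^{p)',\theta})$.

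At this point Theorem 3(b) supplies the desired function directly: the identification $M([L^p]_{p),\theta},L^{p)',\theta})\cong L^{p)',\theta}$ produces $f\in L^{p)',\theta}=L^{(p',\theta}$ that represents $T_{0}$, which is exactly the statement $T|_{[L^p]_{p),\theta}}=f$ claimed by the corollary. No new estimate is needed beyond those already packaged inside Theorems 1--3.

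The one step that requires real care, and that I expect to be the main obstacle, is making the two module structures match. The corollary does not explicitly name the algebra over which $M(L^{p),\theta},L^{(p',\theta})$ is formed, while the proof of Theorem 3(b) uses $\mathrm{Hom}_{L^{1}}$ via the $L^{1}(G)$-bounded approximate identity of [13] together with the factorization in part (a). I would resolve this by fixing $L^{1}(G)$ as the ambient algebra throughout: on a compact $G$ it acts by convolution on both $L^{p),\theta}$ (via Young's inequality, using the inclusion $L^{p),\theta}\subset L^{p-\varepsilon}$) and on $[L^p]_{p),\theta}$, so the two multiplier spaces sit inside a common category and the restriction step becomes unambiguous. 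Once this bookkeeping is in place, the corollary is a one-line consequence of Theorem 3(b).
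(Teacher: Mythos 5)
Your proposal follows essentially the same route as the paper: restrict $T$ to the closed subspace $[L^p]_{p),\theta}$, check that the restriction is linear, continuous and commutes with the convolution module action, and then invoke Theorem 3(b) to identify it with an element $f\in L^{p)',\theta}=L^{(p',\theta}$. The extra bookkeeping you do to fix $L^{1}(G)$ as the ambient algebra is a sensible clarification of a point the paper leaves implicit (its own verification uses multiplication by elements $g\in[L^p]_{p),\theta}$ itself), but it does not change the substance of the argument.
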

\begin{proof}
It is easy to show that $T/[L^p]_{p)\theta}$ is linear and continuous. Let $g,h\in [L^p]_{p)\theta}$.By Theorem 3, $g*h\in[L^p]_{p)\theta} $.Then $$T/[L^p]_{p)\theta}(g*h)=T(g*h)=g*Th=g*T/[L^p]_{p)\theta}.$$ That means,$T/[L^p]_{p)\theta}$ commutes with the module multiplication. If we set $T/[L^p]_{p)\theta} =f,$ by Theorem 3,$f\in M([L^p]_{p),\theta},L^{p)',\theta})=L^{p)',\theta}.$ 
\end{proof}
\begin{theorem}
Let $G$ be a locally compact Abelian group with Haar measure $\mu$ and $\mu(G) $ is finite. If $1<p<\infty$, then the small Lebesgue space $L^{(p'}(G)$ is a Banach convolution module over $L^1(G)$, where $\frac{1}{p}+\frac{1}{p'}=1.$
\end{theorem}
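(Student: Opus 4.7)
The plan is to exploit the defining infimum structure (4) of the small Lebesgue norm, reducing the required convolution bound to a family of classical Young inequalities applied at each exponent $(p-\varepsilon)'$ inside a decomposition $g=\sum_{k}g_{k}$. Because $\mu(G)<\infty$ forces $G$ to be compact, $L^{1}(G)$ is a Banach convolution algebra and Young's inequality $\|f\ast h\|_{q}\le\|f\|_{1}\|h\|_{q}$ is available for every $1\le q\le\infty$, in particular for $q=(p-\varepsilon)'$ with $0<\varepsilon\le p-1$. All that remains is to thread this estimate through the double infimum in (4).

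Concretely, fix $f\in L^{1}(G)$ and $g\in L^{(p',\theta}(G)$, and let $g=\sum_{k=1}^{\infty}g_{k}$ be an arbitrary admissible decomposition with $g_{k}\in\mathcal{M}_{0}$. I would first apply Young's inequality componentwise to obtain, for each $k$ and each $\varepsilon$, the bound $\varepsilon^{-\theta/(p-\varepsilon)}\|f\ast g_{k}\|_{(p-\varepsilon)'}\le\|f\|_{1}\,\varepsilon^{-\theta/(p-\varepsilon)}\|g_{k}\|_{(p-\varepsilon)'}$, and then take $\inf_{\varepsilon}$ on both sides. Next, since the family $\{f\ast g_{k}\}$ lies in $\mathcal{M}_{0}$ and (at least a.e.) $\sum_{k}f\ast g_{k}=f\ast g$, this family is itself an admissible decomposition of $f\ast g$, so by the definition (4),
$$\|f\ast g\|_{(p',\theta}\le\sum_{k=1}^{\infty}\inf_{0<\varepsilon\le p-1}\varepsilon^{-\frac{\theta}{p-\varepsilon}}\|f\ast g_{k}\|_{(p-\varepsilon)'}\le\|f\|_{1}\sum_{k=1}^{\infty}\inf_{0<\varepsilon\le p-1}\varepsilon^{-\frac{\theta}{p-\varepsilon}}\|g_{k}\|_{(p-\varepsilon)'}.$$
Passing to the infimum over all admissible decompositions of $g$ on the right-hand side finally yields the module inequality $\|f\ast g\|_{(p',\theta}\le\|f\|_{1}\,\|g\|_{(p',\theta}$. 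The remaining module axioms (bilinearity and the associativity $f_{1}\ast(f_{2}\ast g)=(f_{1}\ast f_{2})\ast g$ via Fubini) are inherited from the ambient convolution structure, and $L^{(p',\theta}(G)$ is already known to be a Banach space, so this completes the argument.

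The main obstacle lies in the interchange step: justifying that $\sum_{k}f\ast g_{k}$ converges and equals $f\ast g$ a.e.\ whenever the right-hand side of (4) is finite. I would handle this by choosing, for each $k$, a near-optimal $\varepsilon_{k}$ placing $g_{k}\in L^{(p-\varepsilon_{k})'}(G)$ with $\sum_{k}\varepsilon_{k}^{-\theta/(p-\varepsilon_{k})}\|g_{k}\|_{(p-\varepsilon_{k})'}<\infty$; a Fubini-type argument using the translates $y\mapsto f(x-y)$ on the compact group $G$ then legitimizes pulling the convolution operator inside the series. Everything else reduces to bookkeeping with infima, and no deeper machinery beyond (4) and Young is required.
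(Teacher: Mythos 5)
Your proposal is correct and takes essentially the same route as the paper's proof: apply Young's inequality $\|f\ast g_{k}\|_{(p-\varepsilon)'}\le\|f\|_{1}\|g_{k}\|_{(p-\varepsilon)'}$ componentwise to an admissible decomposition $g=\sum_{k}g_{k}$, observe that $\{f\ast g_{k}\}$ decomposes $f\ast g$, and thread the estimate through the double infimum in (4). The only difference is that you explicitly address the interchange $f\ast g=\sum_{k}f\ast g_{k}$, which the paper dismisses as ``easy to show.''
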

\begin{proof}
 Let $f\in L^1(G)$,  $  g \in L^{(p',\theta}$. If  $g=\sum\limits_{k=1}^{\infty }g_{k}$, it is easy to show that  $f*g=\sum\limits_{k=1}^{\infty }f*g_{k}.$  Since $L^{(p-\varepsilon)^{'}}(G)$ is Banach convolution module over $L^1(G)$, we have $$\|f*g_{k}\|_{(p-\varepsilon)^{'}}\leq\|f\|_1\|g_{k}\|_{(p-\varepsilon)'}$$ for all $k$, this implies  
\begin{equation*}
\left\Vert f*g\right\Vert _{p)^{^{\prime },\theta }}=\left\Vert f*g\right\Vert _{(p^{^{\prime }},\theta}\leq
\inf_{g=\sum\limits_{k=1}^{\infty }g_{k}}\left\{
\sum\limits_{k=1}^{\infty }\inf_{0<\varepsilon \leq p-1}\varepsilon ^{-%
\frac{\theta }{p-\varepsilon }}\left( \int_{\Omega }\left\vert
f*g_{k}\right\vert ^{\left( p-\varepsilon \right) ^{^{\prime }}}dx\right) ^{
\frac{1}{\left( p-\varepsilon \right) ^{^{\prime }}}}\right\} 
\end{equation*}
$$=\inf_{g=\sum\limits_{k=1}^{\infty }g_{k}}\left\{\sum\limits_{k=1}^{\infty }\inf_{0<\varepsilon \leq p-1}\varepsilon ^{-%
\frac{\theta }{p-\varepsilon }}\|f*g\|_{(p-\varepsilon)'}\right\}$$
$$\leq\inf_{g=\sum\limits_{k=1}^{\infty }g_{k}}\left\{\sum\limits_{k=1}^{\infty }\inf_{0<\varepsilon \leq p-1}\varepsilon ^{-%
\frac{\theta }{p-\varepsilon }}\|f\|_1\|g\|_{(p-\varepsilon)'}\right\}$$
$$=\|f\|_1\inf_{g=\sum\limits_{k=1}^{\infty }g_{k}}\left\{\sum\limits_{k=1}^{\infty }\inf_{0<\varepsilon\leq p-1}\varepsilon^{-\frac{\theta}{p-\varepsilon}}\|g\|_{(p-\varepsilon)'} \right\}=\|f\|_1\|g\|_{(p',\theta}
$$
This completes the proof.
\end{proof}
\begin{theorem}
Let $G$ be a locally compact Abelian group with Haar measure $\mu$ and $\mu(G) $ is finite. Then the space $L^{(P',\theta}(G)$ admits an approximate identity bounded in $L^1(G)$.
\end{theorem}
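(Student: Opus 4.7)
The plan is to build the approximate identity out of a standard one for $L^1(G)$, verify that it lies in $L^{(p',\theta}(G)$, and then transfer the approximate-identity property from the $L^1$ norm to the small Lebesgue norm using the module structure furnished by Theorem 5.

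First, since $G$ is locally compact abelian, I would fix a net $(e_\alpha)_{\alpha\in I}$ of nonnegative, continuous, compactly supported functions on $G$ with $\int_G e_\alpha\,d\mu=1$ and supports shrinking to the identity; this is a classical bounded approximate identity of $(L^1(G),\|\cdot\|_1)$ with $\|e_\alpha\|_1=1$. To check $e_\alpha\in L^{(p',\theta}(G)$ I would plug the trivial one-term decomposition $e_\alpha=e_\alpha$ into formula (6): because $e_\alpha\in L^\infty(G)$ and $\mu(G)<\infty$ we have $e_\alpha\in L^q(G)$ for every $q$, and fixing any $\varepsilon_0\in(0,p-1]$ in the inner infimum makes both $\varepsilon_0^{-\theta/(p-\varepsilon_0)}$ and $\|e_\alpha\|_{(p-\varepsilon_0)'}$ finite, hence $\|e_\alpha\|_{(p',\theta}<\infty$.

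The central step is showing $\|e_\alpha\ast g-g\|_{(p',\theta}\to 0$ for every $g\in L^{(p',\theta}(G)$. The triangle inequality together with the Young-type bound $\|e_\alpha\ast(g-h)\|_{(p',\theta}\le\|e_\alpha\|_1\|g-h\|_{(p',\theta}$ supplied by Theorem 5 gives, for any $h\in L^{(p',\theta}(G)$,
\begin{equation*}
\|e_\alpha\ast g-g\|_{(p',\theta}\le 2\|g-h\|_{(p',\theta}+\|e_\alpha\ast h-h\|_{(p',\theta}.
\end{equation*}
If $h$ is drawn from a norm-dense subspace on which the last term tends to zero with $\alpha$, a standard three-epsilon argument closes the proof. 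The natural candidate is $h\in C_c(G)$: here $e_\alpha\ast h\to h$ uniformly on $G$ as the supports of $e_\alpha$ shrink, and since $\mu(G)<\infty$ this upgrades to $\|e_\alpha\ast h-h\|_{(p-\varepsilon)'}\to 0$ for every fixed $\varepsilon$; inserting a fixed such $\varepsilon$ into the one-term upper bound in (6) then yields $\|e_\alpha\ast h-h\|_{(p',\theta}\to 0$.

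The main obstacle I anticipate is verifying norm density of $C_c(G)$, or some equally tractable subspace, in $L^{(p',\theta}(G)$, since the infimum-over-decompositions definition (4) does not lend itself to a direct truncation argument. As $L^{(p',\theta}$ is the predual of the non-reflexive grand space via the duality recorded just before Section 2, one expects this density from general Banach-function-space theory; should a clean reference be unavailable, one would need to prove it by hand, for instance by truncating each summand $g_k$ in the representation $g=\sum_k g_k$ and showing the tails are controlled in the infimum norm.
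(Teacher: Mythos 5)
Your proposal follows essentially the same route as the paper: a three-epsilon argument that combines the $L^1$-module bound $\|e_\alpha\ast(f-g)\|_{(p',\theta}\le\|e_\alpha\|_1\|f-g\|_{(p',\theta}$ from Theorem 5 with approximation of $f$ by a compactly supported smooth (resp.\ continuous) function and the one-term embedding of a classical Lebesgue space $L^{p'+\varepsilon}$ (resp.\ $L^{(p-\varepsilon)'}$) into $L^{(p',\theta}$. The density of $C_c(G)$ in $L^{(p',\theta}$ that you flag as the remaining obstacle is precisely the step the paper itself invokes by bare assertion (``Since $C_c^\infty(G)$ is dense in $L^{(p',\theta}$''), so your argument is no less complete than the paper's own.
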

\begin{proof}
It is known that $$ L^{p'+\varepsilon}\subset L^{(p',\theta}\subset L^{p'}$$ for all $\epsilon>0.$ It is also known that $ L^{P'+\varepsilon}$ has an approximate identity $(e_\alpha)_{\alpha\in I }$ bounded in  $L^1(G)$. Assume that $\|e_\alpha\|_1\le C$ for all $\alpha\in I$ and $C>1.$ Since $C^\infty_{c}(G)$ is dense in $L^{(p',\theta}$, for given any $f \in L^{(P',\theta}$ and $\eta>0,$ there exists $g\in C^\infty_{c}(G)$ such that $$\|f-g\|_{p'+\varepsilon} \le\frac{\eta}{3C} $$. Since $ C^\infty_{c}(G)\subset L^{p'+\varepsilon}$ and so $g\in L^{p'+\varepsilon}$, for this given $\eta>0$ there exists $\alpha_0\in I$ such that for all $\alpha>\alpha_{0}$, $$\|e_{\alpha}*g-g\|_{(p',\theta}\le\|e_{\alpha}*g-g\|_{p'+\varepsilon}\le\frac{\eta}{3}.$$ By Theorem 4, for all $\alpha\in I$  we have $f*e_{\alpha}\in L^{(p'}and $$$\|f-e_{\alpha*f\|_{(p',\theta}}=\|f-e_{\alpha}*f+e_{\alpha}*g-e_{\alpha}*g+g-g\|_{(p',\theta}$$$$\le\|f-g\|_{(p',\theta}+\|g-e_{\alpha}*g\|_{(p',\theta}+\|e_{\alpha}*f-e_{\alpha}*g\|_{(p',\theta}$$$$=\|f-g\|_{(p',\theta}+\|g-e_{\alpha}*g\|_{(p',\theta}+\|e_{\alpha}*(f-g)\|_{(p',\theta}\le\frac{\eta}{3C}+\frac{\eta}{3}+$$ $$+\|e_\alpha\|_1\|f-g\|_{(p',\theta}\le\frac{\eta}{3C}+\frac{\eta}{3}+\frac{\eta}{3C}C\le\eta.$$ This completes the proof. 
\end{proof}

From Theorem 4, and theorem 5, we give the following Theorem.

\begin {theorem} 
 Let $G$ be a locally compact Abelian group and $\mu$ its Haar measure. Then $M(L^1(G),L^{(p',\theta}(G))=L^{p),\theta}(G)$ 
\end {theorem}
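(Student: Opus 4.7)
The plan is to parallel the proof of Theorem 3(b), now with Theorems 4 and 5 of this paper playing the roles of Theorem 1 and the corresponding approximate identity statement. The key ingredients are Cohen--Hewitt module factorization, the abstract multiplier--dual identification given by Corollary 2.13 of [19], and the grand/small Lebesgue duality already invoked for Theorem 3(b).

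First, I would record that by Theorem 4 the small Lebesgue space $L^{(p',\theta}(G)$ is a Banach convolution module over $L^1(G)$, and by Theorem 5 it carries an approximate identity bounded in $L^1(G)$. These are exactly the two hypotheses of the Cohen--Hewitt module factorization theorem, so I can apply it to conclude the essential factorization identity
\[ L^1(G) * L^{(p',\theta}(G) = L^{(p',\theta}(G). \]

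Next, mimicking the chain of equalities in Theorem 3(b) verbatim, I would apply Corollary 2.13 of [19] to deduce
\[ M(L^1(G), L^{(p',\theta}(G)) = (L^1(G) * L^{(p',\theta}(G))^* = (L^{(p',\theta}(G))^*, \]
and then finish by invoking the small/grand duality $(L^{(p',\theta}(G))^* = L^{p),\theta}(G)$, the analogue of Corollary 3.8 of [11] already used in Theorem 3(b).

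The step I expect to be most delicate is the application of Corollary 2.13 of [19]. In Theorem 3(b) it was used in a setting where the module and its dual sat inside a single grand--small duality pair; here we are pairing $L^1(G)$ with a small Lebesgue space, so I would need to verify carefully that the Rieffel-type hypothesis is satisfied with the factorization $L^1 * L^{(p',\theta} = L^{(p',\theta}$ supplied by Cohen--Hewitt, and that the duality isomorphism between small and grand Lebesgue spaces is compatible, without isometric adjustment, with the pairing used in that corollary. A secondary issue is that both $L^1(G)$ and $L^{(p',\theta}(G)$ are non-reflexive, so the passage from the factorization to the dual identification requires that Corollary 2.13 of [19] does not implicitly assume reflexivity of either module.
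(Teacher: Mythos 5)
Your proposal matches the paper's own proof essentially step for step: the paper likewise uses the approximate identity from Theorem 5 together with the module structure from Theorem 4 to obtain the factorization $L^1(G)*L^{(p',\theta}(G)=L^{(p',\theta}(G)$ via the module factorization theorem, and then concludes through the identification $M(L^1,L^{(p',\theta})=(L^1(G)*L^{(p',\theta}(G))^{*}=(L^{(p',\theta})^{*}=L^{p),\theta}$. The cautionary remarks you add about Corollary 2.13 of [19] are reasonable, but the route is the same as the paper's.
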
                             
\begin{proof}
Let  $(e_\alpha)_{\alpha\in I }$ be an approximate identity of   the space $L^{(P',\theta}(G)$ bounded in  $L^1(G)$ and let $f\in  L^{(P',\theta}(G).$ Then from Theorem 4 and Theorem 5, $e_\alpha*f\in L^{(P',\theta}(G)$ for all $\alpha\in I,$ and the net $(e_\alpha*f)_{\alpha\in I}$ converges to $f$ in $ L^{(P',\theta}(G)$. Hence  $L^1(G)*L^{(P',\theta}(G)$ is dense in $ L^{(P',\theta}(G).$ Then by the module factorization theorem  $L^1(G)*L^{(P',\theta}(G)=L^{(P',\theta}(G)$. Since $$ (L^{(p'\theta}(\Omega))'= (L^{(p'\theta}(\Omega))^{*}=L^{p),\theta},$$ then we have $$M(L^1,L^{(p'\theta})=(L^1(G)*L^{(P',\theta}(G))^{*}=L^{p),\theta}.$$
\end{proof}
Now we will extend this result from $L^1$ to sub spaces of small Lebesgue space $L^{(p'\theta}(\Omega)$. First we need the following definition.
\begin{definition}
Let $1<p<\infty$ and let $A$ be a linear subspace of small Lebesgue space $L^{(p,\theta}(\Omega)$ with the following properties:

(i) There is a norm $ \|.\|_A$ for A such that $\|.\|\le\|.\|_{(p,\theta}$ and  $(A, \|.\|_A)$ is a Banach $L^1$-module with respect to convolution.

(ii)  There is abounded approximate identity $(e_{\alpha})_{\alpha\in \ I}$ in $L^1$ and a number $M>0$ such that $\|e_{\alpha}\|\ _1<M$ for all $\alpha\in \ I$ and $$\|f*e_{\alpha}-f\|_A\to0$$ for each $f\in\ I$.

The relative completion of A  is the space $$A^{\sim}=\{f\in\ L^{(P,\theta}:f*e_{\alpha}\in A ,  \alpha\in\ I, and\|f\|_{A^\sim}<\infty\}, $$ where $$\|f\|_{A^\sim}=sup_{\alpha}\|f*e_{\alpha}\|_{A}$$

\end{definition}

The proofs of the following two theorems are mutadis and mutandis same as the proofs of Lemma 1.2, Theorem 1.3 in [17] and Proposition 2.2 ,Theorem 2.6 in [9].

\begin {theorem}
Let A be as in  Definition 1.Then we have 

(i) If $f\in A^\sim$ and $g\in L^1$, then $f*g\in A^\sim$ and $$\|f*g\|_{A^\sim}\le\|f\|_{A^\sim}\|g\|_1.$$

(ii) The norms $ \|.\|_{A^\sim}$ and $ \|.\|_A.$ are equivalent on A.

(iii) A is closed subspace of $A^\sim.$
\end {theorem}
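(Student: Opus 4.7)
The plan is to exploit the approximate identity together with the $L^{1}$-module structure of $A$, deducing each of the three conclusions from short convolution manipulations.

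For part (i), I would start from an arbitrary $f\in A^{\sim}$ and $g\in L^{1}(G)$. By Theorem 4 the convolution $f*g$ is well defined in $L^{(p',\theta}$, so the task reduces to showing $(f*g)*e_{\alpha}\in A$ with a uniform bound. Using associativity and commutativity of convolution on the locally compact abelian group $G$, I would rewrite
\[(f*g)*e_{\alpha}=(f*e_{\alpha})*g.\]
Since $f\in A^{\sim}$ gives $f*e_{\alpha}\in A$ and $A$ is a Banach $L^{1}$-module, the right-hand side lies in $A$ with norm bounded by $\|f*e_{\alpha}\|_{A}\|g\|_{1}\leq\|f\|_{A^{\sim}}\|g\|_{1}$. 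Taking the supremum over $\alpha$ yields both $f*g\in A^{\sim}$ and the asserted inequality.

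For part (ii), I would establish the two one-sided estimates between $\|\cdot\|_{A}$ and $\|\cdot\|_{A^{\sim}}$ on $A$. One direction is immediate from the module axiom: for $f\in A$, $\|f*e_{\alpha}\|_{A}\leq\|e_{\alpha}\|_{1}\|f\|_{A}\leq M\|f\|_{A}$, so $\|f\|_{A^{\sim}}\leq M\|f\|_{A}$. The reverse direction uses the approximate identity property $\|f*e_{\alpha}-f\|_{A}\to 0$: by continuity of the norm, $\|f\|_{A}=\lim_{\alpha}\|f*e_{\alpha}\|_{A}\leq\sup_{\alpha}\|f*e_{\alpha}\|_{A}=\|f\|_{A^{\sim}}$.

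For part (iii), the containment $A\subset A^{\sim}$ follows from the upper bound obtained in (ii) together with the fact that $f*e_{\alpha}\in A$ whenever $f\in A$. To see that $A$ is closed, I would take a sequence $(f_{n})\subset A$ converging to some $f$ in $\|\cdot\|_{A^{\sim}}$. By the equivalence from (ii), $(f_{n})$ is also Cauchy in $(A,\|\cdot\|_{A})$, so by completeness of $A$ it converges to some $h\in A$ in the $A$-norm; a further application of (ii) gives $f_{n}\to h$ in $\|\cdot\|_{A^{\sim}}$ as well, and uniqueness of limits then forces $f=h\in A$. The most delicate step I foresee is the rewriting $(f*g)*e_{\alpha}=(f*e_{\alpha})*g$ in part (i): the three factors live a priori in different spaces, so the associativity identity must be justified by verifying that every intermediate convolution is well defined—which Theorem 4 and the Banach $L^{1}$-module hypothesis on $A$ together supply—before Fubini's theorem may legitimately be invoked.
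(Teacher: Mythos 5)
Your proposal is correct and follows essentially the same route as the paper, which does not write out the argument but defers to the standard relative-completion proofs (Lemma 1.2 and Theorem 1.3 in [17], Proposition 2.2 in [9]): the identity $(f*g)*e_{\alpha}=(f*e_{\alpha})*g$ for (i), the two-sided estimate $\|f\|_{A}\leq\|f\|_{A^{\sim}}\leq M\|f\|_{A}$ for (ii), and completeness of $A$ plus uniqueness of limits for (iii) are exactly the steps used there. No gaps.
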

\begin {theorem}
Let A be as in Definition 1 and let $1<p<\infty$. Then the space of multipliers from  $L^1(G)$ to $A$,(i.e  $M(L^1(G),A)$) and $A^\sim$ are algebrically isomorphic and homeomorphic (i.e .$M(L^1(G),A) \cong A^\sim$).
\end {theorem}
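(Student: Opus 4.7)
The plan is to exhibit an explicit bicontinuous algebraic isomorphism $\Phi\colon A^\sim\to M(L^1(G),A)$ defined by $\Phi(f)(g)=f\ast g$, and to invert it using Theorem 7. This is the Gaudry--Liu scheme referenced by the author. I would first verify well-definedness: for $f\in A^\sim$ and $g\in L^1(G)$, Theorem 8(i) gives $f\ast g\in A^\sim$ with $\|f\ast g\|_{A^\sim}\le\|f\|_{A^\sim}\|g\|_1$, but the target of a multiplier must land in $A$ itself. To promote $f\ast g$ from $A^\sim$ into $A$, I would use the approximate identity $(e_\alpha)$: each $(f\ast e_\alpha)\ast g=f\ast(e_\alpha\ast g)$ lies in $A$ because $f\ast e_\alpha\in A$ by the definition of $A^\sim$ and $A$ is an $L^1$-module, while Theorem 8(i) combined with the defining property $\|e_\alpha\ast g-g\|_1\to 0$ of the approximate identity yields $(f\ast e_\alpha)\ast g\to f\ast g$ in $\|\cdot\|_{A^\sim}$. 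Since $A$ is closed in $A^\sim$ by Theorem 8(iii), we conclude $f\ast g\in A$, and the norm equivalence of Theorem 8(ii) converts the $A^\sim$ estimate into $\|\Phi(f)(g)\|_A\le C\|f\|_{A^\sim}\|g\|_1$.

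The module-homomorphism identity $\Phi(f)(g_1\ast g_2)=g_1\ast\Phi(f)(g_2)$ is then immediate from associativity of convolution, so $\Phi(f)\in M(L^1(G),A)$. Injectivity follows at once: if $\Phi(f)=0$ then $f\ast e_\alpha=0$ for every $\alpha$, and hence $\|f\|_{A^\sim}=\sup_\alpha\|f\ast e_\alpha\|_A=0$. For surjectivity, given $T\in M(L^1(G),A)$, I would compose with the contractive inclusion $A\hookrightarrow L^{(p',\theta}$ to obtain $\tilde T\in M(L^1(G),L^{(p',\theta})$; Theorem 7 then produces a unique $f\in L^{p),\theta}$ with $\tilde T(g)=f\ast g$ for all $g\in L^1(G)$. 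Specializing to $g=e_\alpha$ gives $f\ast e_\alpha=T(e_\alpha)\in A$ with $\|f\ast e_\alpha\|_A\le M\|T\|$ uniformly in $\alpha$, whence $f\in A^\sim$ and $\|f\|_{A^\sim}\le M\|T\|$. This exhibits $T=\Phi(f)$ and simultaneously establishes continuity of $\Phi^{-1}$.

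The main obstacle is precisely the well-definedness step: Theorem 8(i) only places the convolution in the possibly larger space $A^\sim$, whereas $M(L^1(G),A)$ demands values in $A$ itself. Overcoming this requires genuine input from all three parts of Theorem 8 simultaneously---the approximate-identity action from Definition 1(ii) to generate a net of approximants already inside $A$, the norm comparison and equivalence in parts (i)--(ii) to upgrade $A^\sim$-approximation into quantitative control in $A$, and the closedness of $A$ in $A^\sim$ from part (iii) to ensure the $A^\sim$-limit does not escape $A$. Once this is settled, surjectivity reduces to invoking Theorem 7 in the ambient small Lebesgue space and verifying that the representing function meets the supremum condition of $A^\sim$, which is routine.
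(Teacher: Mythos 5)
Your proposal is correct and follows essentially the same route as the paper, which gives no proof of its own but defers to the relative-completion arguments of Lemma 1.2 and Theorem 1.3 in [17] and Proposition 2.2 and Theorem 2.6 in [9]; your map $\Phi(f)(g)=f\ast g$, the use of Theorem 8 together with the closedness of $A$ in $A^\sim$ to force $f\ast g\in A$, and the inversion via Theorem 7 applied to $T$ composed with the embedding into the small Lebesgue space are exactly the scheme of those references. In fact your write-up supplies more detail than the paper does at this point.
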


\end{document}